\newcommand{\midarrow}{\tikz \draw[-triangle 90] (0,0) -- +(.1,0);}
\title{Complexity of virtual multistrings}
\author{David Freund}
\newtheorem{thm}{Theorem}[section]
\newtheorem{lem}[thm]{Lemma}
\newtheorem{cor}[thm]{Corollary}
\newtheorem{prop}[thm]{Proposition}
\theoremstyle{remark}
\newtheorem{rem}[thm]{Remark}
\newtheorem{ex}[thm]{Example}
\newcommand{\es}{\varnothing} %Empty set
\newcommand{\A}{\alpha}
\newcommand{\B}{\beta}
\newcommand{\G}{\gamma}
\newcommand{\set}[1]{\left\{#1\right\}} %Set brackets
\newcommand{\wt}[1]{\widetilde{#1}} %Wide tilde
\newcommand{\Rmoves}[1]{
 \begin{tikzpicture}[#1]
	\begin{knot}[xshift=-2cm,clip width=0,end tolerance=.1cm]
		\strand[thick]
		(135:1) to[out=-45,in=90] 
		(0,0) to[out=-90,in=45]
		(225:1);
	\end{knot}
	\begin{knot}[xshift=2cm,clip width=0,end tolerance=.1cm]
		\strand[thick]
		(135:1) to[out=-60,in=-90,looseness=2]
		(.5,0) to[out=90,in=60,looseness=2]
		(225:1);
	\end{knot}
	\draw[dashed] (-2,0) circle (1);
	\draw[dashed] (2,0) circle (1);
	\draw[<->,thick] (-.5,0) -- node[auto] {1} (.5,0);
	
	\begin{scope}[xshift=7cm]
	%Before RII
	\begin{knot}[xshift=-2cm,end tolerance=.1cm]
		\strand[thick]
		(135:1) to[out=-45,in=45] 
		(225:1);
		\strand[thick]
		(45:1) to[out=-135,in=135]
		(-45:1);
	\end{knot}
	
	%After RII
	\begin{knot}[xshift=2cm,clip width=0,end tolerance=.1cm]
	\strand[thick]
		(135:1) to[out=-20,in=90,looseness=.75]	
	  (0.5,0) to[out=-90,in=20,looseness=.75]
		(225:1);
	\strand[thick]
		(45:1) to[out=-160,in=90,looseness=.75]
		(-.5,0) to[out=-90,in=160,looseness=.75]
		(-45:1);
	\end{knot}
		
	\draw[dashed] (-2,0) circle (1);
	\draw[dashed] (2,0) circle (1);
	\draw[<->,thick] (-.5,0) -- node[auto] {2} (.5,0);
	\end{scope}
	
	\begin{scope}[xshift=14cm]
	%Before RIII
	\begin{knot}[xshift=-2cm,clip width=0,end tolerance=.1cm]
		\strand[thick]
			(135:1) to (-45:1);
		\strand[thick]
			(-135:1) to (45:1);
		\strand[thick]
			(180:1) to[out=-20,in=180,looseness=1]
			(0,.75) to[out=0,in=200,looseness=1]
			(0:1);
	\end{knot}
	
	%After RIII
	\begin{knot}[xshift=2cm,clip width=0,end tolerance=.1cm]
		\strand[thick]
			(135:1) to (-45:1);
		\strand[thick]
			(-135:1) to (45:1);
		\strand[thick]
			(180:1) to[out=20,in=180,looseness=1]
			(0,-.75) to[out=0,in=160,looseness=1]
			(0:1);
	\end{knot}

	\draw[dashed] (-2,0) circle (1);
	\draw[dashed] (2,0) circle (1);
	\draw[<->,thick] (-.5,0) -- node[auto] {3} (.5,0);
	\end{scope}
\end{tikzpicture}}
\begin{document}

\begin{abstract} 
A {\em virtual $n$-string} $\alpha$ is a collection of $n$ oriented smooth generic loops on a surface $M$. A {\it stabilization} of $\alpha$ is a surgery that results in attaching a handle to $M$ along disks avoiding $\alpha$, and the inverse operation is a {\em destabilization} of $\alpha$. We consider virtual $n$-strings up to {\em virtual homotopy}, sequences of stabilizations, destabilizations, and homotopies of $\alpha$.

Recently, Cahn~\cite{Cahn} proved that any virtual $1$-string can be virtually homotoped to a genus-minimal and crossing-minimal representative by monotonically decreasing both genus and the number of self-intersections. We generalize her result to the case of connected non-parallel $n$-strings.

Cahn~\cite{Cahn} also proved that any two crossing-irreducible representatives of a virtual $1$-string are related by Type 3 moves, stabilizations, and destabilizations. Kadokami~\cite{Kadokami} claimed that this held for virtual $n$-strings in general, but Gibson~\cite{Gibson} found a counterexample for $5$-strings. We show that Kadokami's statement holds for connected non-parallel $n$-strings and exhibit a counterexample for $3$-strings.
\end{abstract}
\maketitle

\leftline {\em \Small 2010 Mathematics Subject Classification. Primary: 57M99}

\leftline{\em \Small Keywords: virtual homotopy, virtual strings, curves on surfaces}

%%%%%%%%%%%%%%%%%%%%%%%%%%%%%%%%%%%%%%%%%%%%%%%%%%
\section{Introduction}
%%%%%%%%%%%%%%%%%%%%%%%%%%%%%%%%%%%%%%%%%%%%%%%%%%

Throughout this paper, we work in the smooth ($C^\infty$) category. A {\it virtual $n$-string} $\A$ is a collection of $n$ oriented smooth generic loops on a closed oriented (not necessarily connected) surface $M$. Each loop is a {\it component} of $\A$ and we assume each connected component of $M$ contains a component of $\A$. We regard a homotopy of $\A$ as a composition of isotopies and flat Reidemeister moves (depicted in Figure~\ref{fig:moves}) and denote the homotopy class of $\A$ by $[\A]$.

\begin{figure}
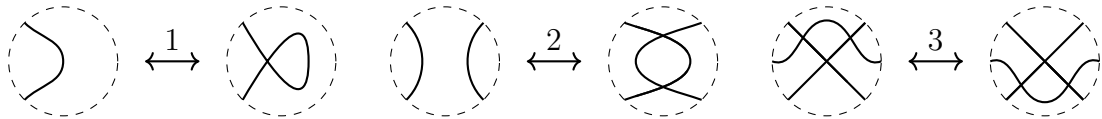

\Rmoves{scale=0.725}
\caption{Flat Reidemeister moves.}
\label{fig:moves}
\end{figure}

We consider equivalence classes of virtual $n$-strings %to be the same 
up to {\it virtual homotopy}, a composition of stabilizations, destabilizations, and homotopies of $\A$. Let $[\A]_V$ denote the virtual homotopy class of $\A$. The virtual homotopy class $[\A]_V$ is also called a {\em flat virtual link} (e.g.,~\cite{Cahn,Henrich}), a projected virtual link (e.g.,~\cite{Kadokami}), the shadow of a virtual link (e.g.,~\cite{Kauffman}), or the universe of a virtual link (e.g.,~\cite{Carter}). The terminology of {\it virtual strings} (i.e., virtual 1-strings) was introduced by Turaev~\cite{Turaev} who used a combinatorial model of these objects.% which, for our purposes, is unnecessary. 

%Turaev introduced virtual 1-strings ({\it virtual strings}) in~\cite{Turaev} by using a chord diagram representation of the loop. {\color{red} Go on about this representation.} Associated to the chord diagram is a canonical surface on which the virtual string is realized. This surface is constructed by thickening the 4-valent graph obtained by gluing the endpoints of the arrows together and subsequently capping off the boundary components with disks. Similarly, a virtual $n$-string can be represented by a chord diagram.

In what follows, we consider two significant quantities related to a virtual $n$-string $\A$: the genus of the supporting surface and the number of double points ({\it crossings}). Each of these quantities measures a type of complexity for the virtual $n$-string and we are interested in understanding how these quantities vary under virtual homotopies.

Borrowing terminology from Cahn~\cite{Cahn}, we say that a virtual $n$-string $\A$ is {\it crossing-reducible} if there is a virtual $n$-string $\A'$, related to $\A$ by stabilizations, destabilizations, and Type~3 moves, such that a crossing-reducing Type~1 or Type~2 move may be applied to $\A'$. A {\it crossing-irreducible} $n$-string is one which is not crossing-reducible, and a {\it crossing-minimal} $n$-string exhibits the minimal number of crossings in its virtual homotopy class.

A virtual $n$-string $\A$ on a surface $M$ is {\it genus-reducible} if there is a homotopically nontrivial simple loop $\G$ on $M$ disjoint from $\A$. Cutting along $\G$ and capping off the resulting boundary with disks, we obtain a virtually homotopic $n$-string $\A'$ on a surface of smaller genus. A {\it genus-irreducible} $n$-string is one which is not genus-reducible, and a {\it genus-minimal} $n$-string exhibits the minimal genus of any $n$-string in the associated virtual homotopy class. By convention, we say that the genus of a non-connected surface is the sum of the genera of the connected components.

We say a virtual $n$-string $\A=\set{L_1,\dots,L_n}$ is {\it non-parallel} if, under any sequence of virtual homotopies, no pair of component curves are powers of parallel curves.
%A virtual $n$-string $\A=\set{L_1,\dots,L_n}$ is {\it strongly-connected} if, under any sequence of virtual homotopies, there is always at least one intersection point between $L_i$ and $L_j$ for all $i\neq j$. That is, the pairwise minimal intersection numbers are strictly positive.
Deferring the definition of a connected multistring to Section 2, our main result is the following:

\begin{thm}\label{thm:complexity} Let $\A$ be a connected non-parallel virtual $n$-string. Then there is a genus-minimal and crossing-minimal $n$-string virtually homotopic to $\A$, unique up to isotopies and Type 3 moves. Moreover, such an $n$-string can be obtained so that neither the genus nor the number of intersection points increase during the virtual homotopy.\end{thm}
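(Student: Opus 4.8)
The plan is to adapt Cahn's argument for virtual $1$-strings to the multistring setting, with the key new ingredient being the passage from a single curve to the ``connected multistring'' perspective mentioned in the excerpt. First I would set up the relevant machinery from Section~2: a notion of a connected multistring and, presumably, a way to encode a virtual $n$-string as a cellular object (a four-valent graph on $M$, i.e.\ the underlying $4$-valent spine of the complement of $\A$) together with the observation that for a connected non-parallel string the supporting surface $M$ is itself connected in any genus-irreducible representative. The heart of the existence half is a monotone reduction procedure: starting from any representative, alternately look for a genus reduction (a nontrivial simple loop $\G$ disjoint from $\A$) and a crossing reduction (a Type~1 or Type~2 move, possibly after Type~3 moves, stabilizations, destabilizations), and show that when none is available we have simultaneously reached the minimal genus and the minimal crossing number. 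The crucial claim — and I expect this to be where the real work lies — is that on a genus-irreducible connected surface, crossing-irreducibility already implies crossing-minimality; this is exactly the kind of statement that fails for general $n$-strings (Gibson's $5$-string counterexample, and the author's own $3$-string example), so the non-parallel and connectedness hypotheses must be used in an essential way here.

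For the existence and monotonicity, I would argue as follows. Given $\A$, first destabilize as much as possible to reach a genus-irreducible representative without ever increasing genus or crossings (cutting along a disjoint nontrivial simple loop and capping never creates crossings and strictly decreases genus). Then, on this surface, apply crossing-reducing Type~1/Type~2 moves wherever available, interspersed with Type~3 moves, stabilizations and destabilizations as in the definition of crossing-reducible; each crossing reduction strictly decreases the crossing number, and I would need to check — using a filling/minimality criterion for curves on surfaces — that the interspersed stabilizations can always be undone so that genus does not drift upward. Iterating, since both genus and crossing number are non-negative integers and the process is monotone, it terminates at an $\A'$ that is both genus-irreducible and crossing-irreducible. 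The remaining point is that genus-irreducible plus the hypotheses forces genus-minimal: here I would invoke the standard fact (essentially Turaev's/Hass--Scott-type minimality) that a connected non-parallel multistring whose complement has no nontrivial disjoint simple loop fills $M$ and is taut, so no virtual homotopy can decrease its genus; and that crossing-irreducible plus genus-minimal forces crossing-minimal for connected non-parallel strings, which is the assertion that distinguishes this case from Gibson's.

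For uniqueness up to isotopy and Type~3 moves, I would prove the analogue of Cahn's second theorem for connected non-parallel $n$-strings: any two crossing-irreducible (equivalently, here, crossing-minimal and genus-minimal) representatives are related by Type~3 moves together with stabilizations and destabilizations, and then observe that since both live on the (connected, genus-minimal) surface, the stabilizations and destabilizations can be eliminated, leaving only Type~3 moves and isotopies. The mechanism should be a filling-curve rigidity argument: two filling minimal-position multistrings on a fixed surface representing the same homotopy classes are related by Type~3 moves (the ``third Reidemeister'' moves being the only nontrivial ambiguity once all bigons and monogons are excluded), a statement for which the non-parallel hypothesis rules out the parallel-curve degeneracies that produce the known $3$- and $5$-string counterexamples. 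I expect the main obstacle to be precisely the implication \emph{crossing-irreducible $\Rightarrow$ crossing-minimal} under the connectedness and non-parallel hypotheses: one must show that no ``detour'' through higher genus or through a temporarily larger surface can beat a crossing-irreducible representative, and this is where a careful analysis of how Type~1/Type~2 bigons and monogons can appear and disappear under stabilization — controlled by the connectedness of the complement graph and the absence of parallel components — has to be carried out.
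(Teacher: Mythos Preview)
Your outline has the right shape --- reduce genus and crossings, show that irreducibility implies minimality under the hypotheses, then prove uniqueness --- but you are planning to do by hand exactly the work that the paper outsources to two imported results, and those results make your self-identified ``main obstacle'' disappear.

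Concretely: the paper never wrestles with interspersed stabilizations during crossing reduction. Instead it invokes the Hass--Scott theorem (Theorem~2.2 of \cite{Hass}, stated here as Theorem~\ref{thm:Hass}), which for a \emph{non-parallel} multistring on a \emph{fixed} surface produces a homotopy that monotonically decreases crossings down to the minimum in that homotopy class. No stabilizations enter, so there is nothing to ``undo.'' The companion Corollary~\ref{cor:Hass} says any two crossing-minimal representatives in the same homotopy class are related through representatives with no more crossings, which immediately gives the Type~3 relation you want. For the genus side, the paper uses the Ilyutko--Manturov--Nikonov uniqueness theorem (Theorem~\ref{thm:IMNfull}): any two genus-irreducible representatives of $[\A]_V$ live on homeomorphic surfaces and are homotopic there. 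This is what yields genus-irreducible $\Rightarrow$ genus-minimal, not a filling argument. The bridge between the two --- showing you can destabilize a crossing-minimal representative without spoiling crossing-minimality --- is Lemma~\ref{lem:IMN}, which carries a destabilizing curve along a crossing-decreasing homotopy; this is the step where your worry about genus ``drifting upward'' is handled, but in the opposite direction (you destabilize the already crossing-reduced string, rather than trying to control stabilizations mid-reduction).

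So the paper's proof is: take a crossing-minimal representative, destabilize it (via Lemma~\ref{lem:IMN}) to a genus-irreducible one $\wt{\B}$, and invoke Theorem~\ref{thm:IMNfull} plus Corollary~\ref{cor:Hass} for uniqueness; for monotonicity from an arbitrary $\A$, apply Hass--Scott on $M$, destabilize, then use Theorem~\ref{thm:IMNfull} to land on the surface of $\wt{\B}$ and Hass--Scott once more. Your direct bigon/monogon and filling-rigidity program might be made to work, but it reproves Hass--Scott and IMN in situ; the non-parallel hypothesis enters precisely because it is the hypothesis of Hass--Scott, not through a bespoke analysis of how parallel components obstruct bigon removal.
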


%As an immediate consequence, we obtain:

%\begin{cor}\label{cor:minimal} Let $\A$ be a genus-minimal virtual $n$-string such that $\A$ is the disjoint union of connected non-parallel virtual multistrings. Then every genus-minimal representative of $[\A]_V$ is homotopic to a crossing-minimal representative. In particular, $[\A]_V$ contains a genus-minimal and crossing-minimal representative.\end{cor}

Theorem~\ref{thm:complexity} follows by considering a collection of established results, generalizing appropriately, and applying them in succession. In Section 2, we show that genus can be decreased monotonically for all virtual $n$-strings. In Section 3, we show that crossing-irreducibility implies crossing-minimality for disjoint unions of connected non-parallel multistrings. Then we build on these results to prove Theorem~\ref{thm:complexity} in Section 4 and conclude with brief remarks in Section 5.

%%%%%%%%%%%%%%%%%%%%%%%%%%%%%%%%%%%%%%%%%%%%%%%%%%
\section{Genus-Irreducible Strings}
%%%%%%%%%%%%%%%%%%%%%%%%%%%%%%%%%%%%%%%%%%%%%%%%%%

Given a virtual $n$-string $\A$, we first focus on finding a virtually-homotopic genus-irreducible string $\A'$. In contrast with crossing-irreducibility, due to the possibility of parallel curves, we are always able to obtain a genus-irreducible representative by monotonically decreasing genus. As noted by Cahn (see Theorem 11.2 in~\cite{Cahn}), Ilyutko, Manturov, and Nikonov proved the following (using our terminology):

\begin{thm}[Theorem 1.2 in~\cite{Manturov}]\label{thm:IMN'} Let $\A$ and $\A'$ be two virtually-homotopic genus-irreducible $1$-strings supported on surfaces $M$ and $M'$, respectively. Then there is an orientation-preserving automorphism $\phi: M\to M'$ such that $\phi(\A)$ is homotopic to $\A'$ on $M'$.
\end{thm}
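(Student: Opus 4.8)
I would prove this by adapting Kuperberg's argument for the uniqueness of the minimal supporting surface of a virtual link to the flat, homotopy-theoretic setting. The virtual homotopy relating $\A$ and $\A'$ is a finite sequence of elementary moves, each being a homotopy on a fixed surface, a stabilization, or a destabilization. I would organize these into the graph whose vertices are pairs (smooth generic loop, closed oriented supporting surface), taken up to orientation-preserving homeomorphism, and whose edges are the elementary moves. Stabilizations strictly raise genus, destabilizations strictly lower it, and homotopies leave it fixed, so a vertex is genus-irreducible exactly when no destabilization issues from it; since the genus is a non-negative integer, repeatedly destabilizing (with homotopies interspersed) always terminates at a genus-irreducible vertex.

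The heart of the proof is a local confluence statement: if a supporting surface $N$ carrying a loop $\B$ admits two destabilizing curves $\G_1$ and $\G_2$, then starting from $N$ one reaches a common vertex, up to orientation-preserving homeomorphism and homotopy of the loop, whether one destabilizes first along $\G_1$ or first along $\G_2$. To prove this, I would isotope $\G_1$ and $\G_2$ inside the open surface $N\smallsetminus\B$ to minimize $|\G_1\cap\G_2|$; if the curves are then disjoint, destabilizing along both in either order gives the same surface, and if they still intersect, an innermost-bigon argument produces an essential simple closed curve disjoint from $\B$ and meeting one of the $\G_i$ in fewer points --- possibly at the cost of a small homotopy of $\B$ across a bigon --- which sets up an induction on intersection number. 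Alongside this I would record the routine facts that a homotopy supported on $N$ can be performed away from any prescribed pair of small attaching disks, so it commutes with the corresponding stabilization, and that a homotopy on a destabilized surface lifts back to the stabilized one.

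Granting local confluence and termination, Newman's lemma shows that the genus-irreducible vertex reached by repeated destabilization is independent, up to orientation-preserving homeomorphism and homotopy of the loop, of the choices made; it is moreover constant along any path of elementary moves, since a stabilization is undone by destabilizing along the core of its handle and homotopies and homeomorphisms do not change it. Applying this to $\A$ on $M$ and $\A'$ on $M'$: each is already genus-irreducible, hence equal to its own normal form, and the two normal forms coincide because $\A$ and $\A'$ lie in the same virtual homotopy class. This produces an orientation-preserving homeomorphism $\phi\colon M\to M'$ with $\phi(\A)$ homotopic to $\A'$, the low-genus cases being checked directly where the bigon arguments degenerate.

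The main obstacle is the confluence lemma when $\G_1$ and $\G_2$ genuinely intersect: one must control how the two removed handles interact and, above all, keep track of the homotopies of $\B$ that the bigon surgeries force, so the confluence has to be established for the combined ``destabilize-or-homotope'' rewriting rather than for destabilizations alone. This is exactly the step with no analogue for $n\ge 2$, where a genus-irreducible $n$-string can fail to be genus-minimal and two genus-irreducible representatives can be non-homeomorphic --- as shown by Gibson's $5$-string example and the $3$-string example constructed below.
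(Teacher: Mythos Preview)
The paper does not supply its own proof of this statement; it is quoted from Ilyutko--Manturov--Nikonov, and the only hint at their argument is that it rests on the device recorded as Lemma~\ref{lem:IMN}, which lets one carry a destabilizing curve along any decreasing homotopy of the loop. Your Kuperberg-style confluence strategy is a legitimate alternative route and is close in spirit to how minimal-genus supporting surfaces are handled for virtual links; the termination-plus-Newman's-lemma skeleton is sound, and in fact nothing in your first three paragraphs is specific to $n=1$.

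That last observation is exactly why your final paragraph is a genuine error. You assert that for $n\ge 2$ a genus-irreducible $n$-string can fail to be genus-minimal and that two genus-irreducible representatives can be non-homeomorphic, citing Gibson's $5$-string example and the $3$-string example of Section~3. Those examples concern \emph{crossing}-irreducibility and Kadokami's statement, not genus: the paper states explicitly (Theorem~\ref{thm:IMNfull}) that the genus result extends verbatim to all virtual $n$-strings, and indeed your own confluence argument uses nothing about the number of components. This misreading also exposes where your sketch is thin. If you believed the bigon step breaks for $n\ge 2$, you have not correctly isolated what makes it work at all; concretely, the clause ``possibly at the cost of a small homotopy of $\B$ across a bigon'' is the soft spot. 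You have not said which bigon (one in $N$, or one in the complement of $\B$), why $\B$ must be pushed across it, or why after that homotopy both $\G_i$ remain essential and disjoint from $\B$. Until that step is made precise --- and it can be, uniformly in $n$ --- Newman's lemma does not yet apply.
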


More generally, the same proof applies to virtual $n$-strings.

\begin{thm}\label{thm:IMNfull} Let $\A$ and $\A'$ be two virtually-homotopic genus-irreducible $n$-strings supported on surfaces $M$ and $M'$, respectively. Then there is an orientation-preserving automorphism $\phi: M\to M'$ such that $\phi(\A)$ is homotopic to $\A'$ on $M'$.\end{thm}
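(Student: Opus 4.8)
The plan is to show that the proof of Theorem~\ref{thm:IMN'} in~\cite{Manturov}, which is stated there for $1$-strings, never uses that $\A$ consists of a single loop. First I would recall the structure of their argument: given a virtually-homotopic pair of genus-irreducible representatives, one fixes a virtual homotopy realizing the equivalence and analyzes how the supporting surface changes. The key local move to understand is a stabilization followed later by a destabilization (or vice versa); the content of their argument is that a genus-irreducible representative cannot be destabilized, so any virtual homotopy between two genus-irreducible strings can be ``pushed'' into a form where the surface-altering moves all cancel, leaving only an ambient homotopy on a single surface, which is exactly the automorphism $\phi$. None of these steps care about the number of components of $\A$ — each argument is about a homotopically nontrivial simple loop $\G$ on $M$ \emph{disjoint from} $\A$, and ``disjoint from $\A$'' makes sense verbatim whether $\A$ has one component or $n$.

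Concretely, I would proceed as follows. First, observe that the definition of genus-irreducibility given in the excerpt (no homotopically nontrivial simple loop on $M$ disjoint from $\A$) is component-number-agnostic, so the hypotheses transfer directly. Second, I would restate the main technical lemma underlying Theorem~\ref{thm:IMN'} — roughly, that if $\A$ on $M$ and $\A'$ on $M'$ are homotopic after a single stabilization-destabilization pair, and both are genus-irreducible, then $M \cong M'$ by an automorphism carrying $\A$ to something homotopic to $\A'$ — and check line by line that its proof uses only the complement $M \smallsetminus \A$ and the isotopy/homotopy classes of loops therein, never the count $n$. Third, I would run the induction of~\cite{Manturov} on the number of stabilizations in a virtual homotopy connecting $\A$ to $\A'$: the base case is an ordinary homotopy on a fixed surface (trivially giving $\phi = \id$), and the inductive step removes one stabilization-destabilization pair using the lemma, exactly as in their proof.

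The main obstacle — really the only place that needs genuine checking rather than a citation — is verifying that the ``handle-slide / curve-surgery'' combinatorics in~\cite{Manturov} does not secretly exploit the $1$-string hypothesis, for instance by invoking a classification of curves on surfaces that is special to a single loop, or by using that the complement of a single loop is connected (which can fail for an $n$-string). I expect this to be benign: the relevant statements are about simple closed curves disjoint from $\A$ and about the change-of-surface induced by cutting along them, and cutting along a loop disjoint from $\A$ behaves identically regardless of how many components $\A$ has. Where connectivity of $M \smallsetminus \A$ is used (if at all), one replaces ``the complementary region'' by ``a complementary region meeting the relevant curve,'' which does not affect the argument. Having dispatched that check, the conclusion — an orientation-preserving automorphism $\phi: M \to M'$ with $\phi(\A)$ homotopic to $\A'$ — follows exactly as in the $1$-string case, so the proof is ``the same proof as Theorem~\ref{thm:IMN'}, applied mutatis mutandis.''
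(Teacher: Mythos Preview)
Your proposal is correct and matches the paper's approach exactly: the paper simply asserts that ``the same proof applies to virtual $n$-strings,'' and your writeup is an expanded justification of precisely that claim. If anything, you give more detail than the paper does about why the argument of~\cite{Manturov} is insensitive to the number of components.
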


Consequently, for a virtual $n$-string $\A$, there is a unique (up to homotopy) genus-irreducible $n$-string in $[\A]_V$.
%virtually homotopic to $\A$. 
Hence genus-irreducible $n$-strings are genus-minimal. Stated differently, any local minimum in genus is a global minimum for virtual $n$-strings. 

The proof of Theorem~\ref{thm:IMNfull} depends upon a generalization of Lemma 1.1 from~\cite{Manturov} to virtual $n$-strings, which is established similarly and we record here for further use:

\begin{lem} \label{lem:IMN} Let $\A,\A'$ be two virtual $n$-strings realized on a common surface $M$, where $\A'$ is obtained from $\A$ by a sequence of decreasing homotopies (i.e., a composition of decreasing Type 1 and 2 moves, Type 3 moves, and isotopies). Let $\G$ be a homotopically nontrivial closed curve in $M$ such that $\G\cap \A = \es$. Then $\G$ can be isotoped simultaneously with the homotopies of $\A$ such that the curves $\A_t$ and $\G_t$ will not intersect for all $t$. In particular, there is a curve $\G'$ isotopic to $\G$ such that $\G'\cap \A'=\es$.\end{lem}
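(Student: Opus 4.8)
The plan is to track a generic Reidemeister move or isotopy step by step, isotoping $\G$ along the way to keep it disjoint from the evolving string. Since the hypothesis is a finite sequence of decreasing homotopies, it suffices to handle a single elementary move: an isotopy of $\A$ (supported in a disk), a Type 3 move, or a decreasing Type 1 or Type 2 move, and then compose. For each such move, the support is a disk $D\subset M$ in which the change takes place, with $\A$ unchanged outside $D$. If $\G$ (after earlier isotopies) is already disjoint from $D$, there is nothing to do. Otherwise, first isotope $\G$ to meet $\partial D$ transversely and minimally; each component of $\G\cap D$ is then an embedded arc with endpoints on $\partial D$.

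The key observation is that $D$ is a disk disjoint from $\A$ only \emph{after} we delete the part of $\A$ sitting inside it; what actually matters is the region $D\setminus \A$. For an isotopy move, $\A\cap D$ is a single unknotted strand (or, for a multistring isotopy, a trivial collection of parallel strands) before and after, and the isotopy is supported in a sub-disk; here one pushes the arcs of $\G\cap D$ off that sub-disk using that $\G\cap\A=\es$ at time $t=0$, so the arcs of $\G$ live in the complement of the strand and can be isotoped rel $\partial D$ to avoid the moving region. For a Type 3 move, the three strands bound a small triangle that is swept across; the arcs of $\G$ in $D$ are disjoint from all three strands, hence lie in one of the complementary regions of the triangle configuration, and in each such region the arc can be pushed away from the triangle before the move and back afterward — this is the standard ``general position for the fourth strand'' argument. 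For a decreasing Type 1 move, a monogon (loop bounding a disk $D_0\subset D$ with $D_0\cap\A$ equal to the loop) is removed; the arcs of $\G$ inside $D$ miss $\A$, so none of them enters $D_0$ (its boundary is part of $\A$), and we simply isotope them to stay clear of the small region where the crossing is removed. For a decreasing Type 2 move, a bigon with $\A$-sides is collapsed; again $\G\cap\A=\es$ forces every arc of $\G\cap D$ to lie outside the bigon, so the collapse can be performed without moving $\G$ after a preliminary isotopy pushing $\G$ off a neighborhood of the bigon.

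Composing these local isotopies of $\G$ across the whole sequence produces an ambient isotopy $\G_t$ of $\G$ with $\G_t\cap\A_t=\es$ for all $t$, and setting $\G'=\G_1$ gives a curve isotopic to $\G$ disjoint from $\A'$. One should check that $\G'$ remains homotopically nontrivial: isotopy preserves the free homotopy class, so $\G'$ is nontrivial exactly because $\G$ was, and this needs no extra argument. I also want to note that simplicity of $\G$ is preserved automatically since we only isotope $\G$; if $\G$ is merely a closed curve (not necessarily simple), as stated, the same argument applies verbatim because self-intersections of $\G$ are never created or destroyed by an ambient isotopy.

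The main obstacle is the bookkeeping for the Type 3 and decreasing Type 2 moves: one must be careful that the preliminary isotopy of $\G$ (pushing its arcs out of the active sub-disk) does not force $\G$ to cross $\A$ elsewhere in $D$. The clean way to see this is that $D\setminus\A$ deformation retracts onto a $1$-complex disjoint from the sub-disk where the move happens — more concretely, for each elementary move the disk $D$ can be chosen with $\A\cap D$ a small fixed model (a strand, a monogon, a bigon, or a triangle of three strands), and $D\setminus\A$ is then an explicit planar surface in which one checks directly that every properly embedded arc can be isotoped rel endpoints away from the model-move region. This is exactly the content of Lemma 1.1 in~\cite{Manturov} in the $1$-string case, and since the models for $n$-strings differ only by the presence of finitely many extra parallel strands (which one treats by the same pushing argument), the generalization is routine.
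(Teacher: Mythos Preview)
Your move-by-move argument is correct and is exactly the standard approach one expects here; the paper itself gives no proof beyond declaring that the lemma ``is established similarly'' to Lemma~1.1 of~\cite{Manturov}, and what you have written is precisely that argument spelled out (with the harmless observation that passing from $1$-strings to $n$-strings changes nothing locally, since the Reidemeister models in the disk $D$ are identical). Your case analyses for the monogon and bigon are clean --- the key point that $\G$ cannot enter the monogon or bigon because their boundaries lie in $\A$ is exactly right --- and the Type~3 case is handled by the push-toward-$\partial D$ trick you describe.
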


As an immediate consequence of Theorem~\ref{thm:IMNfull}, we obtain:

\begin{cor}\label{cor:IMNdiscon}
Let $\A$ be a virtual $n$-string on a non-connected surface $M$, $n\geq 2$. Then any genus-irreducible $n$-string in $[\A]_V$ is supported on a non-connected surface.
\end{cor}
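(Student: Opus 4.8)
The plan is to derive Corollary~\ref{cor:IMNdiscon} directly from Theorem~\ref{thm:IMNfull} by a connectedness argument applied to the claimed automorphism. Suppose $\A$ is a virtual $n$-string on a non-connected surface $M$, and let $\A'$ in $[\A]_V$ be a genus-irreducible representative, supported on some surface $M'$. I want to show $M'$ is non-connected. Since $\A$ is genus-reducible or genus-irreducible, I first pass to a genus-irreducible representative $\A''$ of $[\A]_V$ still supported on (a surface homeomorphic to) $M$: concretely, if $M$ is already genus-irreducible for $\A$ we take $\A''=\A$; otherwise we destabilize, but crucially destabilization along a simple closed curve disjoint from $\A$ either reduces the genus of a connected component or splits a component into two, and in neither case does it merge components, so the number of connected components of the supporting surface is non-decreasing under the genus-reductions that produce a genus-irreducible representative. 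Hence there is a genus-irreducible representative supported on a non-connected surface.

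Next I would invoke Theorem~\ref{thm:IMNfull}: any two genus-irreducible representatives of $[\A]_V$ are related by an orientation-preserving homeomorphism of their supporting surfaces carrying one string to something homotopic to the other. Applying this to $\A'$ and the non-connected genus-irreducible representative constructed above yields a homeomorphism $\phi$ between their supporting surfaces. A homeomorphism preserves the number of connected components, so $M'$ must also be non-connected, which is the desired conclusion.

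The only genuinely substantive point — and the step I'd expect to need the most care — is the claim that one can reach a genus-irreducible representative without merging components of the supporting surface. This requires knowing that the genus-reduction procedure (cut along a homotopically nontrivial simple loop $\G$ disjoint from $\A$, cap with disks) never decreases the component count: cutting along $\G$ and capping either leaves the number of components unchanged (if $\G$ is non-separating in its component, or separating into two pieces each capped back, giving the same component — wait, that increases it) or increases it (if $\G$ separates its component). In particular it is never decreased, so iterating preserves non-connectedness, and since by Theorem~\ref{thm:IMNfull} the genus-irreducible representative is unique up to homeomorphism, every genus-irreducible representative is supported on a non-connected surface. I would state this component-count monotonicity explicitly as the crux of the argument, then let Theorem~\ref{thm:IMNfull} finish the job.
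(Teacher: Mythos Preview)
Your argument is correct and matches the paper's intent: the corollary is stated there simply as an immediate consequence of Theorem~\ref{thm:IMNfull}, and the component-count monotonicity under destabilization that you spell out is exactly the (unstated) glue needed to apply that theorem. Aside from the stream-of-consciousness aside and the slip about being ``still supported on $M$,'' this is the right proof.
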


Thus, if $[\A]_V$ has a genus-irreducible representative on a non-connected surface, it is not possible to cleverly realize $[\A]_V$ on a connected surface of the same genus. We define a virtual $n$-string $\A$ to be {\it connected} if a genus-irreducible representative of $[\A]_V$ is supported on a connected surface. 

%In the context of non-connected virtual multistrings, the assumption that no components are powers of parallel simple curves corresponds to requiring that the virtual homotopy class of a multistring $\A$ be non-parallel. 
We now show that determining whether a connected virtual $n$-string is non-parallel can be established by considering a minimal genus representative. %By definition of being parallel, the curves must be homotopic to curves which do not intersect on a given surface. Since $[\A]$ is strongly-connected, each pair of component strings must intersect on every representative (and hence is not parallel).

\begin{prop} \label{prop:parallel}
Let $\A$ be a connected virtual $n$-string and $\A'$ a genus-minimal representative of $[\A]_V$. Then $\A$ is non-parallel if and only if no component curves of $\A'$ are homotopic to powers of parallel curves.
\end{prop}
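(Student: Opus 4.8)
The forward implication is immediate: $\A'$ is virtually homotopic to $\A$, so if $\A$ is non‑parallel then in particular no pair of components of $\A'$ can be homotopic to powers of parallel curves. For the converse I would argue the contrapositive. Suppose $\A$ has a virtually homotopic representative $\A^{0}$ on a surface $M^{0}$ with two components $L_i^{0},L_j^{0}$ homotopic to powers of parallel curves; since parallel curves are isotopic, this says $[L_i^{0}]=[c]^{a}$ and $[L_j^{0}]=[c]^{b}$ up to conjugacy in $\pi_1(M^{0})$, for a single simple closed curve $c$ and integers $a,b$. I want to deduce that the genus‑minimal representative $\A'$ also has a pair of components homotopic to powers of parallel curves.

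The key reduction is to uniqueness of genus‑minimal representatives. By Theorem~\ref{thm:IMNfull} any two of them differ by an orientation‑preserving homeomorphism followed by a homotopy; a homeomorphism carries a pair of parallel simple closed curves to such a pair and respects free homotopy classes, and a homotopy of a multistring on a fixed surface does not change the homotopy classes of its components, so the property ``some two components are homotopic to powers of parallel curves'' passes between any two genus‑minimal representatives. Hence it suffices to exhibit \emph{one} genus‑minimal representative with the property, and I would obtain it by destabilizing $\A^{0}$. Concretely, I would prove by induction on genus the claim: if a representative $\A^{\ast}$ of $[\A]_V$ on $M^{\ast}$ has two components homotopic to powers of a common simple closed curve, then so does some genus‑minimal representative. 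If $M^{\ast}$ already has minimal genus there is nothing to prove; otherwise $\A^{\ast}$ stays genus‑reducible after any homotopy (a representative on a non‑minimal surface is never genus‑irreducible, as genus‑irreducibility implies genus‑minimality), so I may first homotope $\A^{\ast}$, moving $L_i^{\ast},L_j^{\ast}$ freely across the other components, so that $L_i^{\ast}$ and $L_j^{\ast}$ both lie in a regular neighborhood $A\cong S^1\times[0,1]$ of a simple closed curve $c$ with $[L_i^{\ast}]=[c]^{a}$ and $[L_j^{\ast}]=[c]^{b}$; then destabilize along a homotopically nontrivial simple closed curve $\G$ disjoint from $\A^{\ast}$.

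The point of confining $L_i^{\ast},L_j^{\ast}$ to $A$ is this: when $a,b\neq 0$ these loops wrap essentially around $A$, so $\G$, being disjoint from them, cannot meet $A$ in an arc joining its two boundary circles (such an arc cuts $A$ into a disk, and would confine $L_i^{\ast}$ or $L_j^{\ast}$ to a disk). Thus $\G\cap A$ consists only of arcs inessential in $A$, so $A\setminus\G$ still deformation retracts onto a simple closed curve $c_0$ isotopic to $c$, and the homotopies witnessing $L_i^{\ast}\simeq c_0^{\,a}$ and $L_j^{\ast}\simeq c_0^{\,b}$ may be taken inside $A\setminus\G\subseteq M^{\ast}\setminus N(\G)$. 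Since $M^{\ast}\setminus N(\G)$ embeds in the destabilized surface $M^{\ast\ast}$, these relations persist there, so the images of $L_i^{\ast}$ and $L_j^{\ast}$ are again powers of a common simple closed curve of $M^{\ast\ast}$, and the induction continues. The degenerate possibilities --- $c_0$ becoming nullhomotopic in $M^{\ast\ast}$, or $\G$ lying inside $A$, or one of $a,b$ being $0$ --- put one or both components in a disk; a nullhomotopic component, being the $0$‑th power of an arbitrary simple closed curve, pairs with any other component to give powers of parallel curves (using a parallel push‑off for the other curve, or the two boundary circles of an embedded annulus if both are nullhomotopic).

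The step that needs the most care is precisely this interaction between $\G$ and $A$ --- verifying that nonzero windings really do rule out the spanning arcs, and that one has placed \emph{both} $L_i^{\ast}$ and $L_j^{\ast}$ (not merely one of them) in a common such annulus --- together with the bookkeeping of the degenerate cases just listed. Everything else is a routine combination of Theorem~\ref{thm:IMNfull} with the fact that iterated destabilization of a representative of non‑minimal genus terminates at a genus‑minimal one.
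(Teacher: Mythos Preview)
Your proof is correct and follows the same strategy as the paper's: prove the contrapositive of the converse by iteratively destabilizing a representative that carries two components homotopic to powers of a common simple closed curve, checking at each step that this property survives, and then invoke Theorem~\ref{thm:IMNfull} to transfer the conclusion to the given genus-minimal representative $\A'$. Your annulus argument for the inductive step is more explicit than the paper's, which simply asserts that the two parallel components may be taken to lie in the same connected component of $M\setminus\G$ before destabilizing.
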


\begin{proof}
If $\A$ is non-parallel, then the result follows by definition. To show the converse, we assume that two components of $\A$, $L_1$ and $L_2$, are homotopic to powers of parallel curves and that $\A$ is genus-reducible.

Let $M$ be the surface supporting $\A$. Since $\A$ is genus-reducible, there is a homotopic
%homotopy of $\A$ on $M$ to an 
$n$-string $\A'$ and a destabilizing curve $\G$ disjoint from $\A'$. Let $L_1',L_2'$ be the deformations of $L_1,L_2$ under this homotopy.

As $L_1',L_2'$ are homotopic to powers of parallel curves, we may assume that they are contained in the same connected component of $M\setminus \G$. %If they weren't, we could apply a homotopy to ``overlap'' them and it must still be true.
Destabilizing $M$ along $\G$, we obtain a new virtual $n$-string $\A''$ in which the components corresponding to $L_1',L_2'$ are still powers of parallel curves. %Since destabilizing cannot introduce self-intersections, the parallel curves are still simple.

By repeating this process, we obtain a genus-irreducible representative $\B$ of $[\A]_V$ on which two component curves are powers of parallel curves. By Theorem~\ref{thm:IMNfull}, $\B$ is a genus-minimal representative and every other such representative is homotopic to it. 
\end{proof}

\begin{rem} In the proof, we implicitly used connectedness by assuming that destabilizing would not produce a non-connected surface. Consequently, for non-connected $n$-strings, this result can fail.
\end{rem}

\begin{ex} Consider the 2-string $\A$ consisting of two circles on a sphere, one on each hemisphere. Then $[\A]_V$ is parallel. However, the equator of the sphere $\G$ is a destabilizing curve, and destabilizing along $\G$ produces two spheres, each supporting a circle. The resulting 2-string $\A'$ is genus-minimal and the component curves are not homotopic to parallel curves, forming a counterexample to Proposition~\ref{prop:parallel} for non-connected multistrings.
\end{ex}

As an immediate consequence of Proposition~\ref{prop:parallel}, we can show that a $2$-string is connected and non-parallel by considering a genus-irreducible representative.

\begin{ex} Consider $\A$, the double Kishino double sitting on a genus 4 surface as in Figure~\ref{fig:DKD}. We show that the 2-string is genus-irreducible. It follows that the double Kishino double is connected and non-parallel; hence that the minimal number of intersection points between the component Kishino doubles is 4 by Theorem~\ref{thm:complexity}.
\begin{figure}
\includegraphics[scale=.4]{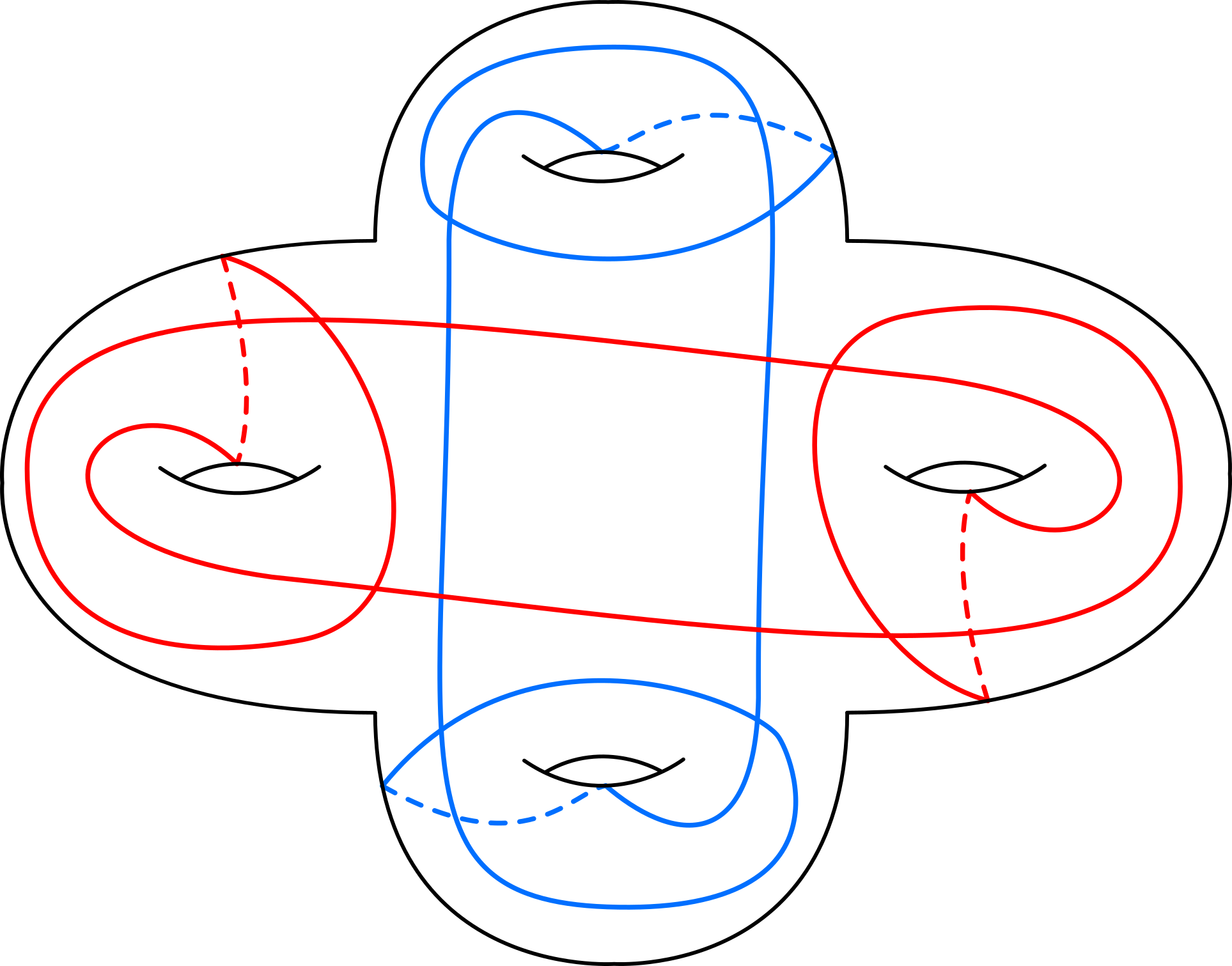}
\caption{Double Kishino double on a genus-4 surface.}
\label{fig:DKD}
\end{figure}

Suppose $\A'$ is homotopic to $\A$ and $\A'$ is genus-reducible. Let $\G$ be the destabilizing curve. Since the component Kishino doubles are not parallel, Theorem~\ref{thm:Hass} applies and so, by Lemma~\ref{lem:IMN}, $\G$ can be isotoped to a destabilizing curve for $\A$. However, no such curve exists for $\A$, and so $\A$ is genus-irreducible as claimed.
\end{ex}

%%%%%%%%%%%%%%%%%%%%%%%%%%%%%%%%%%%%%%%%%%%%%%%%%%
\section{Kadokami's Statement}
%%%%%%%%%%%%%%%%%%%%%%%%%%%%%%%%%%%%%%%%%%%%%%%%%%

The original statement for the uniqueness of crossing-irreducible virtual $n$-strings was first proposed by Kadokami~\cite{Kadokami}. He claimed that any two virtually-homotopic crossing-irreducible virtual $n$-strings were related by a (possibly empty) sequence of Type 3 moves, isotopies, stabilizations, and destabilizations. However, Gibson~\cite{Gibson} exhibited a counterexample to Kadokami's statement for the case $n=5$. Later, Cahn~\cite{Cahn} proved that Kadokami's statement holds for $1$-strings by combining results of Hass and Scott with Theorem~\ref{thm:IMN'}.

In this section, we show that Kadokami's statement holds more generally and give a counterexample for $3$-strings.
%and $4$-strings. 
More formally, we prove the following:

\begin{thm} \label{thm:Kadokami} Let $\A$ be a connected non-parallel $n$-string. Any two crossing-irreducible representatives of $[\A]_V$ are related by a (possibly empty) sequence of Type 3 moves, isotopies, stabilizations, and destabilizations.\end{thm}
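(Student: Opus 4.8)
The plan is to mirror Cahn's argument for $1$-strings but using the $n$-string versions of the ingredients established in the paper. So let $\A_1$ and $\A_2$ be two crossing-irreducible representatives of $[\A]_V$, supported on surfaces $M_1$ and $M_2$. The first step is to reduce to a common surface: since $\A$ is connected, a genus-irreducible representative of $[\A]_V$ is supported on a connected surface $N$, and by Theorem~\ref{thm:IMNfull} this representative is unique up to homotopy. I would argue that destabilizing $\A_i$ as far as possible yields a genus-irreducible representative, and by crossing-irreducibility the destabilizations can be arranged so that crossings never increase; combined with the monotone genus-reduction result of Section~2 and the fact that $\A$ is non-parallel and connected, this should let me assume WLOG that both $\A_1$ and $\A_2$ live on the \emph{same} connected genus-minimal surface $N$ and are homotopic there. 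Here is where I would invoke Proposition~\ref{prop:parallel} to know the genus-minimal representatives still have no parallel components, so the Hass--Scott machinery applies.

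The second, and main, step is to run the Hass--Scott-type argument on the closed surface $N$: two homotopic generic multicurves on $N$, each of which is ``taut'' in the sense of admitting no crossing-reducing Type~1/2 move (even after Type~3 moves), are related by Type~3 moves and isotopy. For $1$-strings Cahn cites Hass--Scott directly; for the $n$-string case I would either cite the appropriate multicurve generalization or reprove it by the same curve-shortening / innermost-bigon analysis, using the non-parallel hypothesis precisely to rule out the annular regions (bigons between parallel strands) that obstruct the conclusion — this is exactly the failure mode behind Gibson's $5$-string and the author's promised $3$-string counterexamples. The output of this step is that $\A_1$ and $\A_2$, viewed on $N$, are related by Type~3 moves and isotopies alone.

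The final step is bookkeeping: concatenate the (inverse) destabilization sequences from Step~1 with the Type~3/isotopy sequence from Step~2 to produce the desired sequence of Type~3 moves, isotopies, stabilizations, and destabilizations relating the original $\A_1$ and $\A_2$. One has to check that the intermediate genus-irreducible representatives reached from $\A_1$ and from $\A_2$ can be taken to be literally the same $n$-string on $N$ (not merely homeomorphic copies), which follows from Theorem~\ref{thm:IMNfull} by absorbing the orientation-preserving automorphism $\phi\colon M_1\to M_2$ into the isotopies/identification.

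The hard part will be Step~2: making precise and correct the multicurve version of the Hass--Scott ``no bigon'' argument and pinpointing where connectedness and non-parallelism are used, since the whole point of the theorem is that the naive $n$-string generalization (Kadokami's claim) is false without these hypotheses. I would expect the non-parallel condition to be exactly what guarantees that an innermost bigon between two strands is not ``half of an annulus,'' so that it can always be removed by a Type~1 or Type~2 move, contradicting crossing-irreducibility unless no bigons exist at all — at which point the two diagrams differ only by Type~3 moves.
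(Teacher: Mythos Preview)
Your proposal is correct and follows essentially the same route as the paper: reduce each crossing-irreducible representative to a genus-irreducible one without changing crossings, use Theorem~\ref{thm:IMNfull} to place both on a common surface, and then invoke Hass--Scott to conclude they differ by Type~3 moves and isotopies. Two small calibrations are worth noting. First, the mechanism in Step~1 is precisely Lemma~\ref{lem:IMN}: since $\A_i$ is crossing-minimal in its homotopy class (by Theorem~\ref{thm:Hass}), any destabilizing curve for a homotopic $n$-string can be isotoped off $\A_i$ itself, so the destabilization is performed directly on $\A_i$ without touching its crossings. Second, your anticipated ``hard part'' in Step~2 is not hard at all: Hass and Scott's original results (Theorem~\ref{thm:Hass} and Corollary~\ref{cor:Hass}) are already stated for immersed multicurves with no pair of components homotopic to powers of parallel curves, so the non-parallel hypothesis lets you cite them directly rather than redo any bigon analysis --- the paper does exactly this.
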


%\begin{cor}\label{cor:Kadokami} Let $\A$ be the disjoint union of connected non-parallel multistrings. Then Kadokami's statement holds for $\A$.\end{cor}

As a consequence, for such $n$-strings, a local minimum in the number of crossings is necessarily a global minimum. That is, crossing-irreducibility implies crossing-minimality for %disjoint unions of 
connected non-parallel multistrings.

%We begin by recalling results of Hass and Scott and showing that non-parallelism can be detected on minimal genus surfaces for connected multistrings.

%%%%%%%%%%%%%%%%%%%%%%%%%%%%%%%%%%%%%%%%%%%%%%%%%%
\subsection{The Hass--Scott theorem}
%%%%%%%%%%%%%%%%%%%%%%%%%%%%%%%%%%%%%%%%%%%%%%%%%%

Hass and Scott~\cite{Hass} define a curve to be an immersed (not necessarily connected) 1-dimensional manifold in a compact oriented surface $M$, where no pair of components are homotopic to powers of parallel curves. In particular, Hass and Scott's curves include non-parallel virtual $n$-strings and so we phrase their results with this terminology.
%Following their remark after the proof of Corollary 2.3 in~\cite{Hass}, they further assume that no pair of components for the curve are powers of parallel simple curves in the surface. They prove the following result and associated corollary:

\begin{thm}[Hass--Scott, Theorem 2.2 in~\cite{Hass}] \label{thm:Hass} Let $\A_0$ be a non-parallel $n$-string %in general position on a surface
which does not minimize the number of crossings in $[\A_0]$. Then there is a homotopy $\A_t$ from $\A_0$ to $\A_1$, which realizes the minimal number of crossings, such that the number of crossings of the curve $\A_t$ is non-increasing with $t$. Moreover, $\A_t$ is a regular homotopy except for a finite number of times when a small loop %in the curve
 shrinks to a point.\end{thm}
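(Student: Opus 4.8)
The plan is to realize the homotopy by a curve-shortening process and to track self-intersections and mutual intersections as it proceeds. First I would fix on $M$ a Riemannian metric $g$ of constant negative curvature on each component (the sphere and torus cases being elementary for non-parallel strings, or handled with a flat metric), and, after a crossing-preserving regular homotopy, assume $\A_0$ is a smooth immersed multicurve with only transverse double points. I would then evolve $\A_0$ by geodesic curvature flow, $\partial_t\A=\kappa\,\mathbf n$, one component at a time; by the work of Gage--Hamilton, Grayson, and Angenent (for self-intersecting initial curves) the flow exists for all time, and on a surface of negative curvature each component either is null-homotopic and shrinks to a point in finite time, or converges, after reparametrization, to the unique closed geodesic in its free homotopy class.

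The crux is the monotonicity of intersection numbers. The flow is real-analytic, so near any self-intersection or mutual intersection the evolving branches obey Angenent's intersection-comparison principle: the number of intersection points of two evolving arcs, or of an arc with itself, is non-increasing in $t$ and drops only at the finitely many instants at which two branches become mutually tangent. At such an instant the multicurve undergoes, up to isotopy supported near the tangency, a crossing-reducing Type~1 move (a small monogon collapses) or a Type~2 move (a small bigon collapses); in the latter case one may insert, arbitrarily near the tangency time, a Type~1 creation followed immediately by a Type~1 collapse, so that every exceptional time becomes a small loop shrinking to a point, exactly as the statement asserts. Away from these finitely many times the flow is a regular homotopy, and the total crossing number of $\A_t$ is non-increasing.

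Finally I would pass to the limit as $t\to\infty$. Each non-null-homotopic component converges to its geodesic representative, and by Hass--Scott~\cite{Hass} a collection of closed geodesics whose underlying primitive curves are pairwise non-parallel is automatically in minimal position, simultaneously realizing the minimal self-intersection number of each component and the minimal number of intersections between each pair. This is precisely where the non-parallel hypothesis enters: geodesics in parallel free homotopy classes may be disjoined, or may even coincide as point sets, so without it the limiting configuration need not be taut, which is the source of Gibson's counterexample and of the $3$-string example below. Concatenating the flow with the short final convergence homotopy yields the desired $\A_t$ from $\A_0$ to a crossing-minimal $\A_1$. The main obstacle is this last passage together with the bookkeeping at singular times: one must rule out a non-null-homotopic component collapsing, verify that the geodesic representatives of distinct components and of their powers cannot coincide under the non-parallel hypothesis, and make the convergence of self-intersecting curves uniform enough that the double-point count actually stabilizes at the minimum rather than merely being bounded by it. An alternative to the smooth flow, and the route Hass and Scott in fact take, is to replace curvature flow by a combinatorial ``disk flow'' that straightens arcs inside small embedded disks; the intersection bookkeeping is then more transparent while the analysis of limits is similar.
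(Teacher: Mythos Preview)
The paper does not prove this statement at all: it is quoted as Theorem~2.2 of Hass--Scott~\cite{Hass} and used as a black box, so there is no in-paper argument to compare against. You already acknowledge this at the end of your sketch, noting that Hass and Scott actually argue via their combinatorial disk flow rather than via smooth curvature flow.

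Your curvature-flow outline is a reasonable alternative route, but two points need correction. First, evolving the components ``one at a time'' does not yield the monotonicity you claim: Angenent's comparison principle applies between two \emph{solutions} of the flow (or a solution and a geodesic), but while component~$i$ is flowing the remaining components are stationary non-solutions, and the difference of local graph functions then satisfies an \emph{inhomogeneous} parabolic equation to which the Sturmian zero-counting argument does not apply. Nothing prevents the moving component from picking up new intersections with a stationary one. The fix is simply to evolve all components simultaneously, after which pairwise and self-intersection counts are genuinely non-increasing. Second, your treatment of Type~2 tangencies is mistaken. A bigon collapse is already a regular homotopy: at the tangency instant each branch remains an immersion (the derivative never vanishes), so the map $S^1\sqcup\cdots\sqcup S^1\to M$ is still an immersion, merely non-generic. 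Only a monogon collapse produces a cusp and breaks regularity, which is exactly what the theorem asserts; inserting extra Type~1 creations and cancellations would add non-regular instants, not remove them. With these two repairs, the curvature-flow approach goes through (modulo the convergence issues you correctly flag), and reaches the same conclusion as Hass--Scott's disk flow; the disk-flow argument trades analytic convergence questions for elementary local combinatorics at singular times.
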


\begin{cor}[Corollary 2.3 in~\cite{Hass}] \label{cor:Hass} Let $\A_0$ and $\A_1$ be homotopic non-parallel virtual $n$-strings, each with $k$ crossings. There is a homotopy $\A_t$ from $\A_0$ to $\A_1$ with the property that each curve $\A_t$ has at most $k$ crossings.\end{cor}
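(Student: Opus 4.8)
The plan is to deduce Corollary~\ref{cor:Hass} from Theorem~\ref{thm:Hass} together with one further input of Hass and Scott~\cite{Hass}: that any two homotopic crossing-minimal non-parallel $n$-strings are joined by a homotopy that keeps the number of crossings constant, i.e.\ by a sequence of isotopies and Type~3 moves. Writing $m$ for the minimal number of crossings in $[\A_0]=[\A_1]$, I would split the argument according to whether $k=m$ or $k>m$.

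Suppose first that $k>m$, so Theorem~\ref{thm:Hass} applies to each of $\A_0$ and $\A_1$. Applied to $\A_0$, it yields a homotopy from $\A_0$ to a crossing-minimal $n$-string $\B_0$ along which the crossing number is non-increasing; in particular every $n$-string along this homotopy has at most $k$ crossings, and $\B_0$ has exactly $m$. Applied to $\A_1$, it yields a homotopy from $\A_1$ to a crossing-minimal $\B_1$ with non-increasing crossing number, hence again bounded by $k$; reversing this homotopy gives a homotopy from $\B_1$ to $\A_1$ whose crossing number rises from $m$ to $k$ and so is at most $k$ throughout. Since $\B_0$ and $\B_1$ are homotopic to $\A_0$ they are non-parallel, and each is crossing-minimal with $m$ crossings; the minimal-case statement then supplies a homotopy between them whose crossing number is identically $m$. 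Concatenating the homotopies from $\A_0$ to $\B_0$, from $\B_0$ to $\B_1$, and from $\B_1$ to $\A_1$ produces a homotopy from $\A_0$ to $\A_1$ whose crossing number never exceeds $k$.

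There remains the case $k=m$, in which $\A_0$ and $\A_1$ are themselves crossing-minimal. Here any homotopy through $n$-strings with at most $k$ crossings is forced to have crossing number identically $k$, so the corollary is exactly the minimal-case statement: two homotopic crossing-minimal non-parallel $n$-strings are related by isotopies and Type~3 moves, neither of which changes the crossing count. I expect this to be the real obstacle, since Theorem~\ref{thm:Hass} by itself says nothing about how two distinct minimal representatives are related; it is precisely here that Hass and Scott's structural analysis of excess intersections --- locating embedded singular monogons and bigons, and tracking how minimal representatives differ --- is needed. Granting that input, the two cases together establish Corollary~\ref{cor:Hass}.
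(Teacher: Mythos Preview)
The paper does not supply its own proof of Corollary~\ref{cor:Hass}; it is quoted verbatim as Corollary~2.3 of Hass--Scott~\cite{Hass} and used as a black box. So there is no argument in the paper to compare against. That said, your reconstruction is the standard one and essentially how Hass and Scott themselves obtain the corollary: reduce both $\A_0$ and $\A_1$ to crossing-minimal representatives via Theorem~\ref{thm:Hass}, then bridge the two minimal representatives using the separate fact (their Theorem~2.1) that homotopic minimal configurations differ only by isotopies and triangle (Type~3) moves.

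Two small remarks. First, your case split is harmless but redundant: once you grant the minimal-case statement, the $k>m$ argument already subsumes the $k=m$ case (take $\B_0=\A_0$, $\B_1=\A_1$). Second, you are right to flag the minimal-case statement as an independent input rather than a consequence of Theorem~\ref{thm:Hass}; note in particular that the present paper later \emph{uses} Corollary~\ref{cor:Hass} to deduce that minimal representatives are related by Type~3 moves (in the proof of Theorem~\ref{thm:Kadokami}), so invoking that conclusion from within this paper would be circular. Citing it directly from~\cite{Hass}, as you do, avoids the issue.
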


%\begin{rem} For a non-connected curve, if a pair of components can be realized as powers of parallel simple curves, then both results can fail (see Example 2.4 in~\cite{Hass}).\end{rem}

%\begin{rem} Although not explicitly stated in the Hass--Scott theorem and corollary, the disk flow used to eventually prove Theorem~\ref{thm:Hass} and Corollary~\ref{cor:Hass} requires that the number of self-intersections be non-increasing and, similarly, intersections between each pair of components is non-increasing for all time. That is, we are monotonically decreasing all types of intersections.
%\end{rem}

%It is possible that we would be able to apply the Hass--Scott theorem to a larger class of multistrings, but we currently cannot imagine a way of describing the ``powers of parallel curves'' without reference to a specific surface. If such a classification were described, we would be very pleased.

%%%%%%%%%%%%%%%%%%%%%%%%%%%%%%%%%%%%%%%%%%%%%%%%%%
\subsection{Proof of Theorem~\ref{thm:Kadokami}}
%%%%%%%%%%%%%%%%%%%%%%%%%%%%%%%%%%%%%%%%%%%%%%%%%%

\begin{proof} Suppose that $\A,\A'$ are virtually homotopic crossing-irreducible $n$-strings that are non-parallel. Let $M,M'$ be the respective supporting surfaces. Then $\A$ and $\A'$ have the minimal number of crossings in their respective homotopy classes on $M,M'$. By Corollary~\ref{cor:Hass}, any other crossing minimal representative is related to $\A,\A'$ by Type 3 moves. We claim that we may further assume $\A$ and $\A'$ are genus-irreducible representatives.

Suppose $\A$ is genus-reducible. Then there is a $n$-string $\A''$ homotopic to $\A$ and a homotopically nontrivial closed curve $\G$ such that destabilizing $\A''$ along $\G$ produces a smaller genus surface. Since $[\A]_V$ is non-parallel, Lemma~\ref{lem:IMN} allows $\G$ to be isotoped to a destabilizing curve for $\A$. Hence we obtain a smaller genus representation of $[\A]_V$ without changing the number of crossings. Thus we assume that $\A,\A'$ are genus-irreducible.%Since $\A$ is strongly-connected, Theorem~\ref{thm:Hass} applies. That is, $\B$ can be homotoped to $\B'=\A$ such that $\A$ is obtained via decreasing Reidemeister moves. Thus Lemma~\ref{lem:IMN} implies that $\G$ can be isotoped to $\G'$ with $\G'\cap \A=\es$. Hence we can destabilize $(F,\A)$ along $\G'$ to produce a smaller genus surface. Thus we may assume that $(F,\A)$ and $(F',\A')$ are genus-irreducible.

By Theorem~\ref{thm:IMNfull}, there is an orientation-preserving automorphism $\phi: M\to M'$ such that $\phi(\A)$ is homotopic to $\A'$ on $M'$. By Corollary~\ref{cor:Hass}, we may assume that the number of crossings does not increase during this homotopy. As $\A,\A'$ are crossing-irreducible, the number cannot decrease either. Hence $\phi(\A)$ and $\A'$ are related by either a regular isotopy or a sequence of Type 3 moves.
%Finally, suppose that $\A$ are $\A'$ are virtually homotopic crossing-irreducible $2$-string which are not strongly connected. That is, the minimal number of intersection points between components in $[\A]$ is 0. {\color{red} I believe this is true, but I hope my notes from a recent research meeting are more helpful.}
%NOTE: We want to show that any crossing-irreducible diagram has 0 intersection points between components. Certainly this will be true for crossing-minimal diagrams.
%Finally, consider a crossing-minimal diagram $\A$. Then $\A$ is certainly crossing-irreducible. Moreover, by the above, any crossing-irreducible diagram $\A'$ is related to $\A$ by a sequence of Type 3 moves. Hence $\A'$ must be crossing-minimal.
\end{proof}

%%%%%%%%%%%%%%%%%%%%%%%%%%%%%%%%%%%%%%%%%%%%%%%%%%
\subsection{Counterexample for $3$-strings} %and $4-$
%%%%%%%%%%%%%%%%%%%%%%%%%%%%%%%%%%%%%%%%%%%%%%%%%%

Gibson's counterexample to the general version of Kadokami's statement relied on having components which were powers of parallel curves and interchanging them. As this is the only obstruction to Hass and Scott's results (see~\cite{Hass}), we use a similar method for virtual $3$-strings.

Consider the virtual 3-strings depicted in Figure~\ref{fig:CE3}. The red and green components are powers of parallel curves on the two-holed torus. The 3-strings are clearly homotopic by interchanging the red and green components. Since the multistring based matrix of each 3-string is primitive (see Example 3.8 in~\cite{MBM}), they are crossing-minimal diagrams. %Furthermore, each 3-string is crossing-irreducible since the available Type 3 moves induce involutions between pairs of isotopic 3-string diagrams, no intersection-reducing Type 1 or Type 2 moves are available, and both 3-strings are genus-minimal. {\color{red} Again, is this enough for crossing-irreducibility?} 
We claim that these 3-strings are not related by a sequence of Type 3 moves and isotopies.

\begin{figure}[ht!]
\begin{tikzpicture}[scale=1.5]
\begin{scope}[very thick,every node/.style={sloped,allow upside down}]
\begin{scope}[inner sep=0pt,outer sep=0pt]
		\node at (1.35,-.5) (RB){};
		\node at (1.35,.5) (RT){};
		\node at (.5,1.35) (TR){};
		\node at (-.5,1.35) (TL){};
		\node at (-1.35,.5) (LT){};
		\node at (-1.35,-.5) (LB){};
		\node at (-.5,-1.35) (BL){};
		\node at (.5,-1.35) (BR){};
\end{scope}

\draw[postaction={decorate},decoration={
    markings,
    mark=at position .0625 with {\arrow{stealth}},
    mark=at position .1875 with {\arrowreversed{stealth}},
		mark=at position .1975 with {\arrowreversed{stealth}},
    mark=at position .3175 with {\arrowreversed{stealth}},
    mark=at position .4475 with {\arrowreversed{latex}}, %Torus 2
		mark=at position .4575 with {\arrowreversed{latex}}, %Torus 2
    mark=at position .5775 with {\arrowreversed{latex}}, %Torus 2
    mark=at position .6875 with {\arrow{latex}}, %Torus 2
		mark=at position .6975 with {\arrow{latex}}, %Torus 2
		mark=at position .8175 with {\arrow{latex}}, %Torus 2
    mark=at position .9375 with {\arrow{stealth}},
		mark=at position .9475 with {\arrow{stealth}},
    }
  ] (RB.west)--(RT.west)--(TR.south)--(TL.south)--(LT.east)--(LB.east)--(BL.north)--(BR.north)--(RB.west)--cycle;

\foreach\k in {1,2,3,4,5}
{
\pgfmathsetmacro\result{\k * .1666} %.1428 for 6 %.125 for 7

\draw[draw=none] (RB)-- node[pos=\result] (Q1\k){} (RT); %Put numbers in node label to see clearly
\draw[draw=none] (RT)-- node[pos=\result] (Q2\k){} (TR);
\draw[draw=none] (TR)-- node[pos=\result] (Q3\k){} (TL);
\draw[draw=none] (TL)-- node[pos=\result] (Q4\k){} (LT);
\draw[draw=none] (LT)-- node[pos=\result] (Q5\k){} (LB);
\draw[draw=none] (LB)-- node[pos=\result] (Q6\k){} (BL);
\draw[draw=none] (BL)-- node[pos=\result] (Q7\k){} (BR);
\draw[draw=none] (BR)-- node[pos=\result] (Q8\k){} (RB);
}

\begin{scope}[on background layer]
\draw[blue, very thick] (Q33.center) -- (0,-.2) to[out=-90,in=90] (-.4,-.45) to[out=-90,in=90] (0,-.7) -- (Q73.center);
\draw[blue, very thick] (Q13.center) -- node[pos=0.75]{\midarrow} (Q53.center);

\draw[red, very thick] (Q65.center) to[out=45,in=135] node[pos=0.25]{\midarrow} (Q81.center);
\draw[red, very thick] (Q25.center) to[out=225,in=-45] (Q41.center);

\draw[black!30!green, very thick] (Q62.center) to[out=45,in=135] node[pos=0.25]{\midarrow} (Q83.center); %Bottom black!30!green
\draw[black!30!green, very thick] (Q63.center) to[out=45,in=135] (Q84.center);

\draw[black!30!green, very thick] (Q23.center) to[out=225,in=-45] (Q43.center); %Top black!30!green
\draw[black!30!green, very thick] (Q22.center) to[out=225,in=-45] (Q44.center);
\end{scope}
\end{scope}

\begin{scope}[xshift=3cm]

\begin{scope}[very thick,every node/.style={sloped,allow upside down}]
\begin{scope}[inner sep=0pt,outer sep=0pt]
		\node at (1.35,-.5) (RB){};
		\node at (1.35,.5) (RT){};
		\node at (.5,1.35) (TR){};
		\node at (-.5,1.35) (TL){};
		\node at (-1.35,.5) (LT){};
		\node at (-1.35,-.5) (LB){};
		\node at (-.5,-1.35) (BL){};
		\node at (.5,-1.35) (BR){};
\end{scope}

\draw[postaction={decorate},decoration={
    markings,
    mark=at position .0625 with {\arrow{stealth}},
    mark=at position .1875 with {\arrowreversed{stealth}},
		mark=at position .1975 with {\arrowreversed{stealth}},
    mark=at position .3175 with {\arrowreversed{stealth}},
    mark=at position .4475 with {\arrowreversed{latex}}, %Torus 2
		mark=at position .4575 with {\arrowreversed{latex}}, %Torus 2
    mark=at position .5775 with {\arrowreversed{latex}}, %Torus 2
    mark=at position .6875 with {\arrow{latex}}, %Torus 2
		mark=at position .6975 with {\arrow{latex}}, %Torus 2
		mark=at position .8175 with {\arrow{latex}}, %Torus 2
    mark=at position .9375 with {\arrow{stealth}},
		mark=at position .9475 with {\arrow{stealth}},
    }
  ] (RB.west)--(RT.west)--(TR.south)--(TL.south)--(LT.east)--(LB.east)--(BL.north)--(BR.north)--(RB.west)--cycle;

\foreach\k in {1,2,3,4,5}
{
\pgfmathsetmacro\result{\k * .1666} %.1428 for 6 %.125 for 7

\draw[draw=none] (RB)-- node[pos=\result] (Q1\k){} (RT); %Put numbers in node label to see clearly
\draw[draw=none] (RT)-- node[pos=\result] (Q2\k){} (TR);
\draw[draw=none] (TR)-- node[pos=\result] (Q3\k){} (TL);
\draw[draw=none] (TL)-- node[pos=\result] (Q4\k){} (LT);
\draw[draw=none] (LT)-- node[pos=\result] (Q5\k){} (LB);
\draw[draw=none] (LB)-- node[pos=\result] (Q6\k){} (BL);
\draw[draw=none] (BL)-- node[pos=\result] (Q7\k){} (BR);
\draw[draw=none] (BR)-- node[pos=\result] (Q8\k){} (RB);
}

\begin{scope}[on background layer]
\draw[blue, very thick] (Q33.center) -- (0,-.6) to[out=-90,in=90] (-.35,-.85) to[out=-90,in=90] (0,-1.1) -- (Q73.center);
\draw[blue, very thick] (Q13.center) -- node[pos=0.75]{\midarrow} (Q53.center);

\draw[red, very thick] (Q62.center) to[out=45,in=135] node[pos=0.25]{\midarrow} (Q84.center);
\draw[red, very thick] (Q22.center) to[out=225,in=-45] (Q44.center);

\draw[black!30!green, very thick] (Q64.center) to[out=45,in=135] node[pos=0.25]{\midarrow} (Q81.center); %Bottom black!30!green
\draw[black!30!green, very thick] (Q65.center) to[out=45,in=135] (Q82.center);

\draw[black!30!green, very thick] (Q25.center) to[out=225,in=-45] (Q41.center); %Top black!30!green
\draw[black!30!green, very thick] (Q24.center) to[out=225,in=-45] (Q42.center);
\end{scope}
\end{scope}\end{scope}
\end{tikzpicture}
\caption{Homotopic crossing-irreducible 3-strings on a two-holed torus not related by Type 3 moves.}
\label{fig:CE3}
\end{figure}
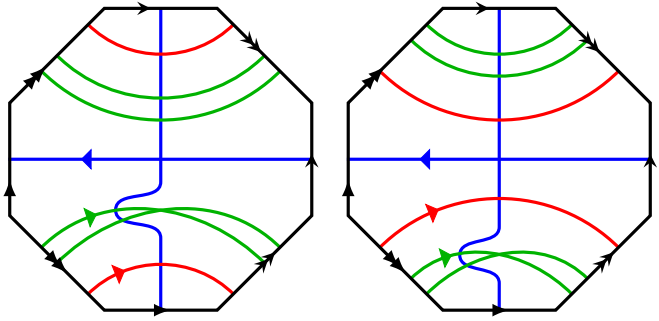

The two available Type 3 moves (each between the green and blue curves) switch between two isotopy classes of diagrams, so it suffices to show that each isotopy class is distinct. We use chord diagrams of virtual multistrings (see~\cite{MBM}).

\begin{lem}\label{lem:isotopy}
Two virtual $n$-strings are related by a sequence of isotopies, stabilizations, and destabilizations if and only if the %relative order of intersections is the same.
chord diagrams are the same.
\end{lem}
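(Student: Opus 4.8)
The plan is to identify the chord diagram of a virtual $n$-string with the data of a regular neighborhood of (the image of) the string, and then to strip away everything lying outside such a neighborhood by destabilizations. For the easy implication, observe that an isotopy of $\A$ neither creates nor destroys double points and preserves their cyclic order along each component, and that an orientation-preserving homeomorphism of the supporting surface likewise preserves this data; hence both fix the chord diagram. A stabilization attaches a handle along two disks disjoint from $\A$, so it changes neither the set of double points nor their arrangement along the components, and therefore fixes the chord diagram, as does its inverse; thus strings related by isotopies, stabilizations and destabilizations have the same chord diagram.

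For the converse, suppose $\A\subset M$ and $\A'\subset M'$ have the same chord diagram $D$. The crucial input — built into the combinatorial model of~\cite{MBM}, and an instance of the standard correspondence between Gauss-type data and abstract link diagrams (see~\cite{Carter,Kadokami}) — is that $D$ is equivalent to the underlying $4$-valent graph of $\A$ together with its rotation system, hence to the homeomorphism type of a regular neighborhood $N(\A)\subset M$ of the image of $\A$ together with the inclusion $\A\hookrightarrow N(\A)$. Fix such a model $(R_D,\A_D)$, a compact surface with boundary and its core; then there are homeomorphisms $N(\A)\cong R_D\cong N(\A')$ carrying $\A$ and $\A'$ onto $\A_D$. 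Let $\Sigma_D$ be the closed surface obtained from $R_D$ by capping each boundary circle with a disk, with $\A_D\subset\Sigma_D$. It suffices to show that $(M,\A)$ is related to $(\Sigma_D,\A_D)$ by isotopies, stabilizations and destabilizations: the same then holds for $(M',\A')$, and since these moves are reversible, concatenating the two sequences proves the lemma.

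To pass from $(M,\A)$ to $(\Sigma_D,\A_D)$, write $M=N(\A)\cup W$ with $W=\overline{M\setminus N(\A)}$, a compact orientable surface disjoint from $\A$ and satisfying $\partial W=\partial N(\A)$. Because each component of $M$ meets $\A$ and $N(\A)$ carries all of $\A$, no component of $W$ is closed; hence $W$ is a disjoint union of disks precisely when it contains no essential simple closed curve, in which case $N(\A)\cup W$ is canonically $\Sigma_D$. So long as $W$ is not a union of disks, pick an essential simple closed curve $c\subset\mathring W$; it is disjoint from $\A$, and an innermost-disk argument — using that $N(\A)$ is a regular neighborhood, so $N(\A)\setminus\A$ is a collar of $\partial N(\A)$ — shows $c$ bounds no disk in $M$, since any such disk could be pushed off $N(\A)$, contradicting essentiality in $W$. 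Destabilize $M$ along $c$: cut along $c$, cap the two resulting boundary circles with disks, and discard any closed component that misses $\A$ (which necessarily has positive genus, so may first be destabilized down to a sphere). Each such move, suitably interpreted, is a destabilization in the sense of the paper — its inverse attaches a handle along two disks avoiding $\A$, possibly lying in distinct components of the surface — and it raises $\chi(W)$ by at least $2$. Since $\chi(W)\le\#\partial N(\A)$, with equality exactly when $W$ is a union of disks, the process terminates at $(\Sigma_D,\A_D)$.

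The main obstacle is this last step: one must check that each simplification of $W$ really is a stabilization or destabilization as defined here, which means carefully tracking separating curves and the $\A$-free closed surfaces they may split off, verifying the homotopically-nontrivial condition via incompressibility of $\partial N(\A)$ in the complement of $\A$, and producing a monotone quantity (such as $\chi(W)$, or the pair given by the genus of $M$ and $-\chi(W)$) that forces termination. The incompressibility input is where the argument is most delicate; the rest is bookkeeping.
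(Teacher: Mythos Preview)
Your argument follows the same route as the paper's: the forward implication is identical, and for the converse both you and the paper identify the chord diagram with the homeomorphism type of a regular neighborhood $N(\A)$ and then pass to the capped-off model surface $\Sigma_D$. The paper compresses the entire reduction to $\Sigma_D$ into a single clause (``Capping off the boundaries with disks corresponds to a sequence of destabilizations''), whereas you spell out the process of simplifying $W=\overline{M\setminus N(\A)}$ curve by curve and track termination via $\chi(W)$. So the approach is the same; you have simply attempted to justify the step the paper asserts.

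One specific claim in your write-up is not correct as stated: an essential simple closed curve $c\subset\mathring W$ need \emph{not} be essential in $M$, and the collar argument does not prove it. The disk $D\subset M$ bounded by $c$ may swallow entire components of $\A$ --- for instance, take $\A$ to be two nested trivial circles on $S^2$, where the core of the annular piece of $W$ bounds a disk containing the inner circle --- and then $D$ cannot be pushed off $N(\A)$. What the collar argument \emph{does} give is that if $c$ bounds a disk in $M\setminus\A$ then it already bounds in $W$; hence when $c$ is essential in $W$ but inessential in $M$, the disk $D$ must contain at least one component of $\A$. In that situation surgery along $c$ still makes sense as the inverse of a handle attachment (feet in distinct components) and your $\chi(W)$ count still increases, but you should not phrase it as ``$c$ bounds no disk in $M$''. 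You were right to flag exactly this step as the delicate one; the paper does not engage with it at all.
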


\begin{proof}
Let $\A_1$ and $\A_2$ be virtual $n$-strings. Note that isotopies, stabilizations, and destabilizations do not change the relative order of crossings. Hence the chord diagram does not change under such moves and the implication follows.

Now suppose the %relative order of crossings in 
chord diagrams associated to $\A_1$ and $\A_2$ are the same, and consider regular neighborhoods of $\A_1$ and $\A_2$. Since the %ordering of crossings agree, 
chord diagrams are the same, the regular neighborhoods are isotopic. Capping off the boundaries with disks corresponds to a sequence of destabilizations. Thus the $n$-strings are isotopic on the resulting surface, and so $\A_1$ and $\A_2$ are related by a sequence of isotopies, stabilizations, and destabilizations. 
\end{proof}

Returning to our example, the %relative ordering of crossings 
chord diagrams associated to each isotopy class are distinct. Hence, by Lemma~\ref{lem:isotopy}, the associated 3-strings cannot be isotopic. Thus these 3-strings form a counterexample to Kadokami's statement.
%Disregarding the blue curve in each 3-string, we cut along the red component (i.e., the simple closed curve). The resulting surfaces with boundary differ by the orientation of the third curve and hence they must be non-isotopic. 
%Cutting along the unique component intersecting the other components a total of once (i.e., the red curve), we obtain  {\color{red} we get something that is difficult to verbalize.} 

%%%%%%%%%%%%%%%%%%%%%%%%%%%%%%%%%%%%%%%%%%%%%%%%%%
\section{Proof of Theorem~\ref{thm:complexity}}
%%%%%%%%%%%%%%%%%%%%%%%%%%%%%%%%%%%%%%%%%%%%%%%%%%

\begin{proof}
Let $\A$ be a connected non-parallel virtual $n$-string on a surface $M$. We first show that a genus-minimal and crossing-minimal representative of $[\A]_V$ exists.

Suppose $\wt{\A}$ is a crossing-minimal representative of $[\A]_V$ on a surface $N$. Using the same argument as in Theorem~\ref{thm:Kadokami}, we apply a sequence of destabilizations to $N$ to obtain a genus-irreducible $n$-string $\wt{\B}$ on a surface $N'$ without increasing the number of crossings. By Theorem~\ref{thm:IMNfull}, $\wt{\B}$ is a genus-minimal representative of $[\A]_V$ and thus the desired representative. Uniqueness up to isotopies and Type 3 moves follows by Theorem~\ref{thm:IMNfull} and Corollary~\ref{cor:Hass}.

We now show that $\A$ can be virtually homotoped to $\wt{\B}$ while monotonically decreasing genus and the number of crossings. By Theorem~\ref{thm:Hass}, we monotonically obtain a representative of $[\A]$ realizing the minimal number of crossings. %Applying homotopies to $\A$, we obtain a homotopic $n$-string realizing the minimal number of double points for the homotopy class of $\A$ on $M$. By Theorem~\ref{thm:Hass}, this may be done without ever increasing the number of double points. 
Again using the same argument from the proof of Theorem~\ref{thm:Kadokami}, %(using Lemma~\ref{lem:IMN}), 
 we apply a sequence of destabilizations to $M$ to obtain a genus-irreducible $n$-string $\B$ on a surface $M'$ without increasing the number of crossings. As $\B$ and $\wt{\B}$ are both genus-irreducible, we obtain a homeomorphism $\phi:M'\to N'$ such that $\phi(\B)$ is homotopic to $\wt{\B}$. By Theorem~\ref{thm:Hass}, we may assume this homotopy monotonically decreases the number of crossings. Hence we obtained $\wt{\B}$ from $\A$ as desired. %by monotonically decreasing the number of crossings and genus.
\end{proof}

%%%%%%%%%%%%%%%%%%%%%%%%%%%%%%%%%%%%%%%%%%%%%%%%%%
\section{Concluding Remarks}
%%%%%%%%%%%%%%%%%%%%%%%%%%%%%%%%%%%%%%%%%%%%%%%%%%

\begin{rem}\label{rem:final1}
By itself, Theorem~\ref{thm:Kadokami} does not show that we can monotonically reduce crossings in obtaining the global minimum. Instead, we rely on the result of Hass and Scott. However, without Kadokami's statement, the crossing-irreducible diagram guaranteed by Hass--Scott would not necessarily be crossing-minimal.
\end{rem}

%\begin{rem}\label{rem:final2}
%A consequence of Theorem~\ref{thm:complexity} is that, for a non-parallel virtual $n$-string, a genus-minimal family of representatives (related by homotopy) will always contain a crossing-minimal representative.
%\end{rem}

\begin{rem}\label{rem:final3}
Consider a genus-minimal $n$-string $\A$ on a non-connected surface, %$M = M_1\sqcup \dots\sqcup M_k$
where each %$M_i$ is a connected surface supporting 
connected component supports a non-parallel multistring $\A_i$. By Theorem~\ref{thm:complexity} and the genus-minimality of $\A$, we can find a crossing- and genus-minimal representative of $\A_i$, and hence of $\A$. However, in obtaining such a representative of $[\A]_V$, %note that we cannot assume the full result of Theorem~\ref{thm:complexity}. In particular, 
we cannot necessarily obtain $\A$ %a genus-minimal representative of such a multistring 
by monotonically decreasing genus and the number of crossings.
\end{rem}

\begin{rem}\label{rem:final4}
We conjecture that, for any virtual $n$-string $\A$, there are representatives of $[\A]_V$ that are simultaneously crossing-minimal and genus-minimal. However, our techniques do not apply to $n$-strings not already covered by Theorem~\ref{thm:complexity}. Without Kadokami's statement, we do not even know if crossing-irreducibility and crossing-minimality are necessarily equivalent. Consequently, for arbitrary $n$-strings with $n\geq 3$, it is currently unknown whether a genus-minimal representative must be homotopic to a crossing-irreducible representative.
%However, for $n$-strings which are neither non-parallel nor completely disconnected (so $n\geq 3$), there is substantially more ambiguity. That is, we do not currently know whether a genus-minimal representative will exhibit the minimal number of intersections. {\color{red} At one point, I thought I had proved this. I wonder if it's true.}
\end{rem}

{\small
\bibliographystyle{abbrv}
\bibliography{references}

\begin{thebibliography}{10}

\bibitem{Cahn}
P.~Cahn.
\newblock A generalization of {{Turaev}}'s virtual string cobracket and
  self-intersections of virtual strings.
\newblock {\em Commun. Contemp. Math.}, 19(4):1650053, 2017.

\bibitem{Carter}
J.~S. Carter, S.~Kamada, and M.~Saito.
\newblock Stable equivalence of knots on surfaces and virtual knot cobordisms.
\newblock {\em J. Knot Theory Ramifications}, 11(3):311--322, 2002.

\bibitem{MBM}
D.~Freund.
\newblock {Multistring based matrices}.
\newblock arXiv:1709.00564 [math.GT], 2017.

\bibitem{Gibson}
A.~Gibson.
\newblock On tabulating virtual strings.
\newblock {\em Acta Math. Vietnam.}, 33(3):493--518, 2008.

\bibitem{Hass}
J.~Hass and P.~Scott.
\newblock Shortening curves on surfaces.
\newblock {\em Topology}, 33(1):25--43, 1994.

\bibitem{Henrich}
A.~Henrich.
\newblock A sequence of degree one vassiliev invariants for virtual knots.
\newblock {\em J. Knot Theory Ramifications}, 19(4):461--487, 2010.

\bibitem{Manturov}
D.~P. Ilyutko, V.~O. Manturov, and I.~M. Nikonov.
\newblock Parity in knot theory and graph-links.
\newblock {\em J. Math. Sci.}, 193(6):809--965, 2013.

\bibitem{Kadokami}
T.~Kadokami.
\newblock Detecting {{Non}}-{{Triviality}} of {{Virtual Links}}.
\newblock {\em J. Knot Theory Ramifications}, 12(6):781--803, 2003.

\bibitem{Kauffman}
L.~H. Kauffman.
\newblock Virtual {{Knot Theory}}.
\newblock {\em European J. Combin.}, 20(7):663--691, 1999.

\bibitem{Turaev}
V.~Turaev.
\newblock Virtual strings.
\newblock {\em Ann. Inst. Fourier (Grenoble)}, 54(7):2455--2525, 2004.

\end{thebibliography}
}
\end{document}